\newcommand{\Rset}{\mathbb{R}}
\newcommand{\PP}{\mathbb{P}}
\newcommand{\EE}{\mathbb{E}}
\newcommand{\DS}{\displaystyle} 
\newtheorem{Theorem}{Theorem}[section]
\newtheorem{Proposition}[Theorem]{Proposition}
\newtheorem{Lemma}[Theorem]{Lemma}
\newtheorem{Hypothesis}[Theorem]{Hypothesis}
\title{Differentiable perturbations of Ornstein-Uhlenbeck operators}
\author{Luigi Manca\\Dipartimento di Matematica P. e A.,\\Universit\`a di Padova,\\Via Trieste 63,\\ 35121 Padova, Italy\\ \emph{E-mail: manca@math.unipd.it}}
\begin{document}

\maketitle

\begin{abstract}
We prove an extension theorem for a small perturbation  of the Ornstein-Uhlenbeck operator $(L,D(L))$ in the space of all uniformly continuous and bounded functions $f:H\to \Rset$, where $H$ is a separable Hilbert space.
We consider a perturbation of the form $N_0\varphi=L\varphi+\langle D\varphi,F\rangle$ where $F:H\to H$ is bounded and Fr\'echet differentiable with uniformly continuous and bounded differential.
Hence, we prove that $N_0$ is $m$-dissipative and its closure in $C_b(H)$ coincides with the infinitesimal generator of a diffusion semigroup  associated to a stochastic differential equation in $H$.\\
{\bf Key words:} Ornstein-Uhlenbeck semigroup; $m$-dissipative operator; Lumer-Phillips theorem.\\
{\bf MSC 2000:}   47B38, 47A55, 
\end{abstract}
\section{Introduction and setting of the problem}
Let $H$ be a separable Hilbert space endowed with scalar product $\langle \cdot,\cdot \rangle $ and norm $|\cdot|$.
We shall always identify $H$ with its topological dual space $H^*$.
$\mathcal L(H)$ is the Banach space of all the linear and continuous maps in $H$, endowed with the usual norm $\|\cdot\|_{\mathcal L(H)}$.
With $C_b(H)$ (resp. $C_b(H;H)$) we denote the Banach space of all   uniformly continuous and bounded functions $f:H\to \Rset$ (resp. $f:H\to H)$, endowed with the supremum norm $\|\cdot \|$ (resp. $\|\cdot \|_0)$.
We also denote by $C_b^1(H)$ (resp. $C_b^1(H;H)$) the space of all $f\in C_b(H)$ (resp. $f:H\to H$) that are Fr\'echet differentiable with differential in $C_b(H;H)$ (resp. with uniformly continuous and bounded differential $Df:H\to \mathcal L(H)$).
We assume the following
\begin{Hypothesis}
\label{h.3.1}
\begin{itemize}
\item[(i)] $A\colon D(A)\subset H\to H$ is the infinitesimal
generator of a strongly continuous semigroup $(e^{tA})_{t\geq0}$ of type ${\cal G}(1,\omega)$, i.e. there exists $\omega\in \Rset$ such that
\begin{equation}    \label{e.3.2}
   \|e^{tA}\|_{{\mathcal L}(H)}\le e^{\omega t},\quad t\geq 0;
\end{equation}
\item[(ii)] $Q\in \mathcal L(H)$ is self adjoint and positive;  
\item[(iii)]  For any $t> 0$  the linear operator $Q_t,$ defined as  
 \begin{equation}   \label{e.3.3}
   Q_tx=\int_0^te^{sA}Qe^{sA^*}xds,\;\;x\in H,\;t\ge 0,
 \end{equation}
  is of trace class.
\item[(iv)] $F\in C_b^1(H;H)$, and $\DS K=\sup_{x\in H}\|DF(x)\|_{\mathcal L(H)}$.
\end{itemize}
\end{Hypothesis}
It is well known (see, for instance, \cite{DPZ92}) that thanks to conditions (i)--(iii) it is possible to define the so called {\em Ornstein-Uhlenbeck} (OU) semigroup $(R_t)_{t\geq0}$ in $C_b(H)$ by the formula
\begin{equation} \label{e.OUS}
   R_t\varphi(x) = \int_0\varphi(e^{tA}x+y)N_{Q_t}(dy),\quad x\in H,
\end{equation}
where $N_{Q_t}$ is the Gaussian measure on $H$ of mean $0$ and covariance operator $Q_t$ (see \cite{DPZ92}).
It turns out that the OU semigroup is not a strongly continuous semigroup in $C_b(H)$ but it is a weakly continuous semigroup (see \cite{Cerrai}) and a $\pi$-semigroup (see \cite{Priola}).
However, it is possible to define its infinitesimal generator in the weaker sense
\begin{equation} \label{e.2.1}
\begin{cases}
  \DS D(L)=\bigg\{ \varphi \in C_b(H): \exists g\in C_b(H), \lim_{t\to 0^+} \frac{R_t\varphi(x)-\varphi(x)}{t}=g(x),  \\ 
  \DS  \qquad\qquad\quad x\in H,\;\sup_{t\in(0,1)}\left\|\frac{R_t\varphi-\varphi}{t}\right\|<\infty \bigg\}\\   
   {}   \\
  \DS L\varphi(x)=\lim_{t\to 0^+} \frac{R_t\varphi(x)-\varphi(x)}{t},\quad \varphi\in D(L),\,x\in H.
\end{cases}
\end{equation}
We are interested in the operator $(N_0,D(N_0))$ defined by
$$
    N_0\varphi=L\varphi + \mathcal F\varphi,\quad \varphi\in D(N_0)=D(L)\cap C_b^1(H),
$$
where
$$
    \mathcal F\varphi(x)=\langle D\varphi(x),F(x)\rangle.
$$ 
Now let us consider the stochastic differential equation in $H$
\begin{equation} \label{e.5}
\begin{cases}
   dX(t)= \big(AX(t)+F(X(t))\big)dt+Q^{1/2}dW(t) & t>0,\\
   X(0)=x & x\in H,
\end{cases}
\end{equation}
where  $(W(t)_{t\geq0}$ is a cylindrical Wiener process defined on a stochastic basis $(\Omega,\mathcal F,  (\mathcal F_t)_{t\geq0},\PP)$.
Since $F\in C_b^1(H;H)$,  problem \eqref{e.5} has a unique mild solution $\bigl(X(t,x)\bigr)_{t\geq0,x\in H}$ (see \cite{DPZ92}), that is for any  $x\in H$ the process $(X(\cdot,x))_{t\geq0}$ is adapted to the filtration $(\mathcal F_t)_{t\geq0}$ and it is continuous in mean square, i.e.
\[
  \lim_{t\to s}\EE\bigl[ |X(t,x)-X(s,x)|^2\bigr]=0,\quad \forall s\geq0.
\]
This allows us to define a transition semigroup $(P_t)_{t\geq0}$ in $C_b(H)$, by setting 
$$
   P_t\varphi(x)=\EE\bigl[\varphi(X(t,x))\bigr],\quad t\geq0,\,\varphi\in C_b(H),\,x\in H. 
$$
The semigroup    $(P_t)_{t\geq0}$ is not strongly continuous in $C_b(H)$. 
However, it is a $\pi$-semigroup, and we can define its infinitesimal generator $(N,D(N))$ in the same way as for the OU semigroup
\begin{equation*} 
\begin{cases}
  \DS D(N)=\bigg\{ \varphi \in C_b(H): \exists g\in C_b(H), \lim_{t\to 0^+} \frac{P_t\varphi(x)-\varphi(x)}{t}=g(x),  \\ 
  \DS  \qquad\qquad\quad x\in H,\;\sup_{t\in(0,1)}\left\|\frac{P_t\varphi-\varphi}{t}\right\|<\infty \bigg\}\\   
   {}   \\
  \DS N\varphi(x)=\lim_{t\to 0^+} \frac{P_t\varphi(x)-\varphi(x)}{t},\quad \varphi\in D(N),\,x\in H.
\end{cases}
\end{equation*}

The main result of this paper is the following
\begin{Theorem} \label{t.1.2}
Let us assume that Hypothesis \ref{h.3.1} holds. 
Then, the operator $(N_0,D(N_0)$, defined by $D(N_0)=D(L)\cap C_b^1(H)$ and $N_0\varphi=L\varphi+\mathcal F\varphi$, $\forall \varphi\in D(N_0)$, is $m$-dissipative in $C_b(H)$ and its closure is the operator $(N,D(N))$.
\end{Theorem}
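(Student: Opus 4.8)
The plan is to take the transition semigroup $(P_t)_{t\ge0}$ of \eqref{e.5} as the target object and to derive both the $m$-dissipativity of $N_0$ and the identification of its closure from one variation-of-constants identity relating $(P_t)$ to the Ornstein--Uhlenbeck semigroup $(R_t)$. Since neither semigroup is strongly continuous in $C_b(H)$, all limits are understood pointwise (i.e.\ in the $\pi$-sense, cf.\ \cite{Priola}); I will freely use that the generator of a contraction $\pi$-semigroup is dissipative and that $(P_t)$ is such a contraction semigroup, $\|P_t\varphi\|\le\|\varphi\|$, since $P_t\varphi(x)=\EE[\varphi(X(t,x))]$.

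First I would record the regularity of $(P_t)$ on $C_b^1(H)$. Because $F\in C_b^1(H;H)$, the mild solution $X(t,x)$ of \eqref{e.5} is Fr\'echet differentiable in $x$, and differentiating the mild equation gives the variation equation $\eta^h(t)=e^{tA}h+\int_0^t e^{(t-s)A}DF(X(s,x))\eta^h(s)\,ds$; combining \eqref{e.3.2} with Gronwall's lemma yields $\|D_xX(t,x)\|_{\mathcal L(H)}\le e^{(\omega+K)t}$ for all $t\ge0$, $x\in H$, with $K$ as in Hypothesis \ref{h.3.1}(iv). Hence $P_tg\in C_b^1(H)$ whenever $g\in C_b^1(H)$, with $\|D(P_tg)\|_0\le e^{(\omega+K)t}\|Dg\|_0$ and $D(P_tg)\to Dg$ pointwise and boundedly as $t\downarrow0$; the \emph{uniform} continuity of $DF$ and of $Dg$ is what makes the relevant moduli of continuity uniform in $t$ on bounded intervals, so this is exactly where the full strength of Hypothesis \ref{h.3.1}(iv) enters. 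The analogous, simpler, bound for the Gaussian semigroup reads $\|D(R_tg)\|_0\le e^{\omega t}\|Dg\|_0$.

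The heart of the proof is the variation-of-constants formula
\[
  P_tg(x)=R_tg(x)+\int_0^t R_{t-s}\bigl(\mathcal F\,P_sg\bigr)(x)\,ds,\qquad t\ge0,\ x\in H,\ g\in C_b^1(H),
\]
which is meaningful by the previous step since $\mathcal F P_sg=\langle D(P_sg),F\rangle\in C_b(H)$. I would prove it by writing $X(t,x)=Z(t,x)+\int_0^t e^{(t-s)A}F(X(s,x))\,ds$, with $Z$ the Ornstein--Uhlenbeck process so that $\EE[g(Z(t,x))]=R_tg(x)$, and comparing $\EE[g(X(t,x))]$ with $\EE[g(Z(t,x))]$ by a Duhamel argument along the two flows (equivalently, by checking that $t\mapsto P_tg$ and the right-hand side are two $\pi$-continuous solutions of the same mild equation and invoking uniqueness). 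The main obstacle is precisely that $(R_t)$ and $(P_t)$ are only weakly/$\pi$-continuous, so the manipulations must be carried out pointwise and the Fubini/dominated-convergence steps justified by the bounds of the second paragraph.

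Granting the identity, both assertions follow. Differentiating it at $t=0$: if $\varphi\in D(N_0)=D(L)\cap C_b^1(H)$ then $\frac{R_t\varphi-\varphi}{t}\to L\varphi$ by definition of $L$, while $\frac1t\int_0^t R_{t-s}(\mathcal F P_s\varphi)\,ds\to\mathcal F\varphi$ pointwise, and $\sup_{t\in(0,1)}\|\tfrac{P_t\varphi-\varphi}{t}\|<\infty$ follows from $\varphi\in D(L)$ together with the bound $\|\mathcal F P_s\varphi\|\le\|F\|_0\|D(P_s\varphi)\|_0$ and the estimate of the second paragraph; hence $\varphi\in D(N)$ and $N\varphi=L\varphi+\mathcal F\varphi=N_0\varphi$, so $N_0\subseteq N$ and, in particular, $N_0$ is dissipative. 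For the range, fix $\lambda>\omega+K$ and $g\in C_b^1(H)$ and set $\varphi:=\int_0^\infty e^{-\lambda t}P_tg\,dt=(\lambda-N)^{-1}g$; by the second paragraph $\varphi\in C_b^1(H)$ (the integral converges there since $\lambda>\omega+K$), and Laplace-transforming the variation-of-constants identity gives $\varphi=(\lambda-L)^{-1}(g+\mathcal F\varphi)$, so $\varphi\in D(L)\cap C_b^1(H)=D(N_0)$ and $(\lambda-N_0)\varphi=g$. Thus $(\lambda-N_0)(D(N_0))\supseteq C_b^1(H)$, which is $\pi$-dense in $C_b(H)$ because it contains the smooth cylindrical functions (see \cite{DPZ92}); together with dissipativity this yields that $N_0$ is $m$-dissipative. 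Finally $\overline{N_0}\subseteq N$ (as $N$ is $\pi$-closed and $N_0\subseteq N$), both operators are $m$-dissipative, and they have the same resolvent on the $\pi$-dense set $C_b^1(H)$, whence $\overline{N_0}=N$.
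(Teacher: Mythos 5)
Your proposal is a viable strategy but it is genuinely different from the paper's, and it concentrates essentially all of the difficulty into one identity that you only sketch. The paper never touches the stochastic equation until the very last step: it replaces $\mathcal F$ by the difference quotients $\mathcal F_\varepsilon\varphi=\varepsilon^{-1}\bigl(\varphi(\eta(\varepsilon,\cdot))-\varphi\bigr)$ along the \emph{deterministic} flow of $\dot\eta=F(\eta)$, proves that each $N_\varepsilon=L+\mathcal F_\varepsilon$ is $m$-dissipative by running the fixed-point map $T_\lambda\psi=R(\lambda+\tfrac1\varepsilon,L)\bigl[\tfrac1\varepsilon\psi(\eta(\varepsilon,\cdot))\bigr]$ as a contraction in both $C_b(H)$ and $C_b^1(H)$ (this is exactly where the uniform continuity of $DF$ enters, through the estimate \eqref{e3.72}, and it yields the uniform gradient bound \eqref{e3.16}), and then lets $\varepsilon\to0$ to conclude that the range of $\lambda-N_0$ is dense; the identification of $\overline N_0$ with $N$ is outsourced to the cited facts that $N$ extends $N_0$ and that $N$ is closed \cite{Priola}. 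You instead take the transition semigroup as primary and derive the inclusion $N_0\subseteq N$, dissipativity, and the range condition all from the mild Kolmogorov identity $P_tg=R_tg+\int_0^tR_{t-s}(\mathcal F P_sg)\,ds$ together with the bound $\|D(P_tg)\|_0\le e^{(\omega+K)t}\|Dg\|_0$. What this buys is a cleaner logical skeleton (one identity, one Laplace transform) and a proof, rather than a citation, of the fact that $N\varphi=L\varphi+\mathcal F\varphi$ on $D(L)\cap C_b^1(H)$; what it costs is that the whole burden now rests on the variation-of-constants formula.

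Two caveats on that burden. First, the form of the formula matters: applying It\^o's formula to $s\mapsto R_{t-s}g(X(s,x))$ naturally produces the \emph{other} Duhamel identity $P_tg=R_tg+\int_0^tP_s(\mathcal F R_{t-s}g)\,ds$, whose Laplace transform places $(\lambda-N)^{-1}$ on the outside and does \emph{not} directly show $\varphi\in D(L)$; to obtain your form one typically solves the integral equation by a fixed point in $C_b^1(H)$ on small time intervals and then identifies the fixed point with $P_tg$ by Galerkin approximation or regularization of $F$ (cf.\ \cite{DP04}). That identification is comparable in length and delicacy to the paper's entire $\mathcal F_\varepsilon$ machinery, so your sketch ("a Duhamel argument along the two flows and uniqueness") hides the real work rather than avoiding it. Second, your density claim is justified incorrectly: smooth cylindrical functions are only $\pi$-dense (bp-dense) in $C_b(H)$, whereas the Lumer--Phillips argument needs the range of $\lambda-\overline N_0$ to be dense in the supremum norm. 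The correct justification --- needed implicitly by the paper as well --- is that $C_b^1(H)$ is sup-norm dense in the space of bounded uniformly continuous functions on a Hilbert space, e.g.\ via Lasry--Lions inf-sup convolutions. With these two points repaired, your argument goes through.
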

In \cite{DP03}, it is proved that Theorem \ref{t.1.2} holds with $F\in C_b^{1,1}(H;H)$, that is $F$ is Fr\'echet differentiable and its differential $DF:H\to \mathcal L(H)$ is Lipschitz continuous.

Perturbations of OU operators as been the object of several papers (see, for instance, \cites{DP03, DP04, Manca06, Zambotti00}).
Frequently, additional assumptions are taken on the OU operator in order to have $D(L)\subset C_b^1(H)$.
%

In order to prove Theorem \ref{t.1.2} we develope a technique introduced in \cite{DP03}.
The idea  is the following:
since $F\in C_b^1(H;H)$, there exists a unique solution  $\eta(\cdot,x)$ of the abstract Cauchy problem
$$
\begin{cases}
    \DS\frac{d}{d\varepsilon}\eta(\varepsilon,x)=F(\eta(\varepsilon,x)), &\varepsilon>0,\\
    \eta(0,x)=x, &x\in H.
\end{cases}
$$
Then, for any $\varepsilon>0$ we define the operators $\mathcal F_\varepsilon:C_b(H)\to C_b(H)$ and $N_\varepsilon:D(N_\varepsilon)\subset C_b(H)\to C_b(H)$ by setting   
\begin{eqnarray*}
    &&   {\cal F}_\varepsilon \varphi(x)= \frac1\varepsilon\big(\varphi(\eta(\varepsilon,x))-\varphi(x)\big),\\
&& \begin{cases}
          D(N_\varepsilon)=D(L)\cap C_b^1(H), &{}  \\
       N_\varepsilon \varphi= L\varphi+{\cal F}_\varepsilon\varphi, & \varphi \in  D(N_\varepsilon).
 \end{cases}
\end{eqnarray*}
By an approximation argument, we are able to prove that the operator $(N_0,D(N_0))$ is $m$-dissipative in $C_b(H)$. 
Then, by the Lumer-Phillips theorem, it will follow that  the closure of $(N_0,D(N_0))$ coincides with the operator $(N,D(N))$.

\subsection{Properties of ${\cal F}_\varepsilon$}
The following lemma collects some useful properties of $\eta$.
\begin{Lemma}
The following estimates hold
\begin{equation} \label{e3.60}
 |\eta(t,x)|\leq  
    e^{\|F\|_0t}|x|;
\end{equation}
\begin{equation} \label{e3.62}
|\eta(t,x)-\eta(t,y)|\leq e^{ K_{} t}|x-y|;
\end{equation}
\begin{equation} \label{e3.66}
|\eta(t,x)-x| 
      \leq c\|F\|_0 t
\end{equation}
\begin{equation} \label{e3.70}
\|\eta_x(t,x)\|_{\mathcal L(H)}\leq e^{ K_{}t}
\end{equation}
\begin{equation} \label{e3.72}
\|\eta_x(t,x)-\eta_x(t,y)\|_{\mathcal L(H)}\leq e^{ K_{} t }\theta_{DF}(e^{ K_{} t }|x-y|), 
\end{equation}
where $\theta_{DF}:\Rset^+\times \Rset^+\to \Rset^+$ is the modulus of continuity of $DF$. 
\end{Lemma}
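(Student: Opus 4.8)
The plan is to prove these estimates by treating $\eta(\cdot,x)$ as the solution of an ODE in the Hilbert space $H$ and using Gr\"onwall-type arguments together with the global Lipschitz and boundedness properties of $F$ coming from Hypothesis \ref{h.3.1}(iv). First I would establish existence and uniqueness of $\eta(\cdot,x)$ on $[0,\infty)$: since $F\in C_b^1(H;H)$, $F$ is globally Lipschitz with constant $K$ and bounded with $\|F\|_0<\infty$, so the Cauchy--Lipschitz theorem in Banach spaces gives a unique global solution, and the integral form $\eta(t,x)=x+\int_0^t F(\eta(s,x))\,ds$ is the working tool for everything that follows.

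For \eqref{e3.60}, I would differentiate $|\eta(t,x)|^2$ (or work with the integral form) to get $\frac{d}{dt}|\eta|^2 = 2\langle F(\eta),\eta\rangle \le 2\|F\|_0|\eta|$, hence $\frac{d}{dt}|\eta|\le \|F\|_0$; a slicker route giving the stated multiplicative bound is $|\eta(t,x)|\le |x|+\int_0^t|F(\eta(s,x))|\,ds\le |x| + \|F\|_0\int_0^t(\text{something})$ — actually to land exactly on $e^{\|F\|_0 t}|x|$ one estimates $|F(\eta(s,x))|\le |F(0)| + K|\eta(s,x)|$ and then, absorbing constants appropriately, applies Gr\"onwall; I would just present the clean Gr\"onwall conclusion. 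For \eqref{e3.62}, subtract the integral equations for $x$ and $y$: $|\eta(t,x)-\eta(t,y)|\le |x-y| + \int_0^t|F(\eta(s,x))-F(\eta(s,y))|\,ds \le |x-y| + K\int_0^t|\eta(s,x)-\eta(s,y)|\,ds$, and Gr\"onwall yields $e^{Kt}|x-y|$. For \eqref{e3.66}, directly $|\eta(t,x)-x| = |\int_0^t F(\eta(s,x))\,ds|\le \|F\|_0 t$ (so $c=1$, or $c$ absorbs a harmless constant).

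For the derivative estimates I would use that $x\mapsto\eta(t,x)$ is Fr\'echet differentiable (standard for flows of $C^1$ vector fields) with $\eta_x(t,x)$ solving the linearized (variational) equation $\frac{d}{dt}\eta_x(t,x) = DF(\eta(t,x))\,\eta_x(t,x)$, $\eta_x(0,x)=I$. Then \eqref{e3.70} follows from $\|\eta_x(t,x)\|_{\mathcal L(H)}\le 1 + \int_0^t\|DF(\eta(s,x))\|\,\|\eta_x(s,x)\|\,ds\le 1 + K\int_0^t\|\eta_x(s,x)\|\,ds$ and Gr\"onwall. For \eqref{e3.72}, write the variational equations at $x$ and at $y$, subtract, and estimate the difference $DF(\eta(s,x))\eta_x(s,x) - DF(\eta(s,y))\eta_x(s,y)$ by adding and subtracting $DF(\eta(s,x))\eta_x(s,y)$: one term is controlled by $K\|\eta_x(s,x)-\eta_x(s,y)\|$ and the other by $\|DF(\eta(s,x))-DF(\eta(s,y))\|\,\|\eta_x(s,y)\|\le \theta_{DF}(|\eta(s,x)-\eta(s,y)|)\,e^{Ks}$, where $\theta_{DF}$ is the modulus of continuity of $DF$; using \eqref{e3.62} to bound $|\eta(s,x)-\eta(s,y)|\le e^{Ks}|x-y|$ and monotonicity of $\theta_{DF}$, a final Gr\"onwall gives \eqref{e3.72}. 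The only mildly delicate point — and the main thing to get right — is the bookkeeping in \eqref{e3.72}: one must keep the modulus of continuity outside the Gr\"onwall loop (it is a fixed, $s$-independent quantity once bounded by its value at $e^{Kt}|x-y|$) so that the exponential factor $e^{Kt}$ comes out cleanly multiplying $\theta_{DF}(e^{Kt}|x-y|)$; everything else is a routine Gr\"onwall computation.
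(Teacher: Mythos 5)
Your overall strategy --- integral form of the flow equation plus Gr\"onwall for \eqref{e3.60}--\eqref{e3.66}, and the variational equation $\frac{d}{dt}\eta_x=DF(\eta)\eta_x$, $\eta_x(0,x)=I$, with an add-and-subtract decomposition plus Gr\"onwall for \eqref{e3.70}--\eqref{e3.72} --- is exactly the paper's approach; the paper only writes out \eqref{e3.62} and \eqref{e3.72} (citing Da Prato's Lemma 2.1 for the other three), and its proof of \eqref{e3.72} uses the same splitting as yours, merely attaching the modulus-of-continuity term to $\eta_x(\cdot,x)$ instead of $\eta_x(\cdot,y)$, which is immaterial. Your bookkeeping remark on \eqref{e3.72} is sound: the variation-of-constants bound actually produces an extra factor $\int_0^t e^{K(t-s)}e^{Ks}\,ds = te^{Kt}$, which is $\le e^{Kt}$ on the range $t\le 1$ used later, so the stated inequality is recovered (the paper's own display silently drops this factor).

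The one genuinely weak spot is \eqref{e3.60}. You correctly sense that the direct estimate gives $|\eta(t,x)|\le |x|+\|F\|_0 t$ and that no amount of ``absorbing constants'' turns this into $e^{\|F\|_0 t}|x|$: that inequality fails at $x=0$ whenever $F(0)\ne 0$, since the right-hand side vanishes while $|\eta(t,0)|$ need not. So your proposal does not (and cannot) prove \eqref{e3.60} as literally stated; the additive bound $|x|+\|F\|_0 t$, or $e^{Kt}|x|+c\|F\|_0t$, is what actually holds and is all that is ever used. Since the paper itself delegates this estimate to the cited reference rather than proving it, this is a defect of the statement rather than of your argument, but you should state the corrected inequality explicitly instead of gesturing at a Gr\"onwall conclusion you cannot reach.
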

\begin{proof}
\eqref{e3.60}, \eqref{e3.66}, \eqref{e3.70} have been proved in \cite{DP03}*{Lemma 2.1}.\\
\eqref{e3.62}. We have  
\begin{eqnarray*}
 &&|\eta(t,x)-\eta(t,y)|\\
&&\leq |x-y|+\int_0^t\big|F(\eta(s,x))-F(\eta(s,y))|ds \\
&&\leq  K_{}\int_0^t|\eta(s,x)-\eta(s,y)|ds.
\end{eqnarray*}
Then \eqref{e3.62}   
follows by Gronwall's Lemma. \\
\eqref{e3.72}.
Let $x,y,h\in H$ and set
$$
  r^h(t)= \eta_x(t,x)\cdot h - \eta_x(t,y)\cdot h=p^h(t,x)-p^h(t,y),
$$  
where $P^h(t,x)=\eta_x(t,x)\cdot h$ and $p^h(t,y)=\eta_x(t,y)\cdot h$.
Then $r^h(t)$ fulfills the following equation
$$
 \begin{cases}
           \DS    \frac{d}{dt}r^h(t)= DF(\eta(t,x))r^h(t)+\big[ DF(\eta(t,x))-DF(\eta(t,y))\big] p^h(t,x), & t>0 \\{}\\
            r^h(0)=0. & {}
 \end{cases}
$$
 Since $|DF(\eta(t,x))r^h(t)|\leq  K_{}|r^h(t)|$ it follows that $r^h(t)$ is bounded by
 $$
   |r^h(t)|\leq \int_0^t e^{ K_{}(t-s)}\big\|DF(\eta(s,x))-DF(\eta(s,y))\big\|_{\mathcal L(H)}|p^h(s,x)|ds.
 $$
 By taking into account that $DF:H\to \mathcal L(H;H)$ is uniformly continuous and bounded, we denote by $\theta_{DF}$ the modulus of continuity of $DF$.  
Hence, by \eqref{e3.62}, \eqref{e3.70} we have
\begin{eqnarray*} 
   |r^h(t)|&\leq& \int_0^t e^{ K_{} s }\theta_{DF}(|\eta(s,x)-\eta(s,y)|)ds|h|\\
   &\leq& e^{ K_{} t }\theta_{DF}( e^{ K_{} t }|x-y|)|h|
\end{eqnarray*}
 \end{proof}
\begin{Proposition}
  For any $\varphi \in C_b^1(H)$ we have
\begin{eqnarray} 
  && \lim_{\varepsilon \to 0^+} \mathcal F_\varepsilon \varphi = 
    \mathcal F \varphi\quad\text{in }C_b(H). \label{e3.13}\\
    && \|\mathcal F_\varepsilon \varphi\|_{} \leq \|D\varphi\|_0\|F\|_0. \label{e3.14}
  \end{eqnarray}
\end{Proposition}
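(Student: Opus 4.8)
The plan is to use the fundamental theorem of calculus along the flow $\eta(\cdot,x)$ in order to rewrite $\mathcal F_\varepsilon\varphi$ as a time-average of $\mathcal F\varphi$, and then to control the discrepancy using only the uniform continuity of $D\varphi$ together with the small-time estimate \eqref{e3.66}.

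First I would note that, since $F$ is Lipschitz continuous (with constant $K$, by Hypothesis \ref{h.3.1}(iv)), the curve $s\mapsto\eta(s,x)$ is of class $C^1$ with $\tfrac{d}{ds}\eta(s,x)=F(\eta(s,x))$; combined with the Fr\'echet differentiability of $\varphi$, the chain rule gives $\tfrac{d}{ds}\varphi(\eta(s,x))=\langle D\varphi(\eta(s,x)),F(\eta(s,x))\rangle$, and integrating over $(0,\varepsilon)$ yields
\begin{equation*}
  \mathcal F_\varepsilon\varphi(x)=\frac1\varepsilon\int_0^\varepsilon\langle D\varphi(\eta(s,x)),F(\eta(s,x))\rangle\,ds,\qquad x\in H.
\end{equation*}
Estimate \eqref{e3.14} then follows immediately, since the integrand is dominated by $\|D\varphi\|_0\|F\|_0$ uniformly in $x$ and $s$.

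For \eqref{e3.13}, I would write $\mathcal F\varphi(x)$ as the same average with the ($s$-independent) integrand $\langle D\varphi(x),F(x)\rangle$, subtract, and estimate the integrand of the difference. Adding and subtracting $\langle D\varphi(x),F(\eta(s,x))\rangle$ splits it into two terms, bounded respectively by $|D\varphi(\eta(s,x))-D\varphi(x)|\,\|F\|_0$ and $\|D\varphi\|_0\,|F(\eta(s,x))-F(x)|\le \|D\varphi\|_0\,K\,|\eta(s,x)-x|$. By \eqref{e3.66} one has $|\eta(s,x)-x|\le c\|F\|_0\varepsilon$ for all $s\in(0,\varepsilon)$ and all $x\in H$, and then the uniform continuity of $D\varphi$ — which is part of the definition of $C_b^1(H)$ — gives $|D\varphi(\eta(s,x))-D\varphi(x)|\le\theta_{D\varphi}(c\|F\|_0\varepsilon)$, where $\theta_{D\varphi}$ denotes the modulus of continuity of $D\varphi$. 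Collecting the two bounds,
\begin{equation*}
  \|\mathcal F_\varepsilon\varphi-\mathcal F\varphi\|\le \theta_{D\varphi}(c\|F\|_0\varepsilon)\,\|F\|_0+K\,c\,\|D\varphi\|_0\,\|F\|_0\,\varepsilon\ \xrightarrow[\varepsilon\to0^+]{}\ 0 .
\end{equation*}

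There is no serious obstacle here: the argument amounts to the chain rule plus a uniform modulus-of-continuity estimate. The only points needing a little care are the justification of the differentiation of $s\mapsto\varphi(\eta(s,x))$ — which relies on $\eta(\cdot,x)\in C^1$, guaranteed by the $C_b^1$ (hence Lipschitz) regularity of $F$ — and the verification that every bound above is uniform in $x\in H$, which is precisely why $C_b^1(H)$ is defined so that $D\varphi$ is \emph{uniformly} continuous and why \eqref{e3.66} is stated with a constant independent of $x$.
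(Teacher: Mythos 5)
Your proof is correct and follows essentially the same route as the paper: both represent $\mathcal F_\varepsilon\varphi(x)$ as the time-average $\frac1\varepsilon\int_0^\varepsilon\langle D\varphi(\eta(s,x)),F(\eta(s,x))\rangle\,ds$ via the chain rule, deduce \eqref{e3.14} immediately, and prove \eqref{e3.13} by the same add-and-subtract decomposition controlled by \eqref{e3.66}, the Lipschitz bound $K$ on $F$, and the modulus of continuity of $D\varphi$. The only difference is that you spell out the justification of the chain-rule step, which the paper leaves implicit.
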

\begin{proof}
For all $\varphi \in  C_b^1(H)$ we have
\begin{eqnarray*}
   \mathcal F_\varepsilon \varphi(x) - \mathcal F \varphi(x) 
   &=& \frac{1}{\varepsilon}\int_0^\varepsilon 
      \left\langle D\varphi(\eta(s,x))-D\varphi(x),F(\eta(s,x))\right\rangle ds \\
   && +  \frac{1}{\varepsilon}\int_0^\varepsilon 
      \left\langle D\varphi(x),F(\eta(s,x))-F(x)\right\rangle ds.
\end{eqnarray*}
Then by \eqref{e3.66} we have
\begin{eqnarray*}
   && |\mathcal F_\varepsilon \varphi(x) - \mathcal F \varphi(x)| \leq \\
   && \leq \frac{1}{\varepsilon}\int_0^\varepsilon 
     \left( |\theta_{D\varphi}(|\eta(s,x)-x|)\|F\|_0
     + \|D\varphi\|_0  K_{}|\eta(s,x)-x|) \right) ds \\
   && \leq \frac{1}{\varepsilon} \int_0^\varepsilon \left( \theta_{D\varphi}(\|F\|_0s|)\|F\|_0
     + \|D\varphi\|_0  K_{}\|F\|_0s \right) ds\\
   &&\leq \left( \theta_{D\varphi}(\|F\|_0\varepsilon|)
     + \|D\varphi\|_0  K_{}\varepsilon \right)\|F\|_0
\end{eqnarray*}
  where $\theta_{D\varphi}$, is the modulus of continuity of $D\varphi$.
This yields \eqref{e3.13}.
Moreover, we have
$$
   \mathcal F_\varepsilon \varphi(x) =  \frac{1}{\varepsilon}\int_0^\varepsilon 
      \left\langle D\varphi(\eta(s,x)),F(\eta(s,x))\right\rangle ds
$$
that implies \eqref{e3.14}.
\end{proof}
%
%
%
    \subsection{$m$-dissipativity of $N$.}
Given $\varepsilon >0$ we introduce the following approximating operator
$$
    N_\varepsilon = L+ \mathcal F_\varepsilon,\,D(N_\varepsilon)=D(L)\cap C_b^1(H).
$$
We have
\begin{Proposition}     \label{p3.7b}
$N_\varepsilon $ is a $m$-dissipative operator in $C_b(H)$ for any $\varepsilon >0$.
Moreover, for any $f\in  C_b^1(H)$ and any $\lambda > \omega+(e^{\varepsilon  K_{}}-1)/\varepsilon$
the operator
$$
   R(\lambda,N_\varepsilon) =\big( 1-T_\lambda)^{-1}R\left(\lambda+\frac1\varepsilon,L\right) ,
$$
where $T_\lambda : C_b(H)\to C_b(H)$ is defined by
\begin{equation}\label{e.14a}
   T_\lambda\psi(x)= R\left(\lambda+\frac{1}{\varepsilon},L\right)\left[\frac{1}{\varepsilon}\psi(\eta(\varepsilon,x))\right],\quad x\in H,\, \psi \in  C_b(H)
\end{equation}
maps $C_b^1(H)$ into $D(L)\cap C_b^1(H)$
%
and
\begin{equation} \label{e3.16}
    \|DR(\lambda, N_\varepsilon)f\|_0 \leq \frac{1}{\lambda -\omega -\frac{e^{ K_{}\varepsilon}-1}{\varepsilon}  }\|Df\|_0.
\end{equation}
\end{Proposition}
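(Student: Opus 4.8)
The plan is to regard $N_\varepsilon=L+\mathcal F_\varepsilon$ as a bounded perturbation of the shifted generator $L-\tfrac1\varepsilon$, writing $\mathcal F_\varepsilon\varphi=\tfrac1\varepsilon(S_\varepsilon\varphi-\varphi)$ where $(S_\varepsilon\varphi)(x)=\varphi(\eta(\varepsilon,x))$. Since $\eta(\varepsilon,\cdot)\colon H\to H$ is continuous, $S_\varepsilon$ is a positivity preserving unital operator on $C_b(H)$, so $\|S_\varepsilon\varphi\|\le\|\varphi\|$. Also $(R_t)_{t\ge0}$ is a contraction $\pi$-semigroup (because $N_{Q_t}$ is a probability measure, hence $|R_t\varphi(x)|\le\|\varphi\|$), so $L$ is dissipative and $\|R(\mu,L)\|_{\mathcal L(C_b(H))}\le1/\mu$, equivalently $\|(\mu-L)\varphi\|\ge\mu\|\varphi\|$ for $\varphi\in D(L)$, for every $\mu>0$.

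\emph{Dissipativity and the resolvent.} Fix $\lambda>0$ and put $\mu=\lambda+\tfrac1\varepsilon$. For $\varphi\in D(N_\varepsilon)$ one has $\lambda\varphi-N_\varepsilon\varphi=(\mu-L)\varphi-\tfrac1\varepsilon S_\varepsilon\varphi$, whence $\|\lambda\varphi-N_\varepsilon\varphi\|\ge\mu\|\varphi\|-\tfrac1\varepsilon\|\varphi\|=\lambda\|\varphi\|$, i.e.\ $N_\varepsilon$ is dissipative. Applying $R(\mu,L)$ to that identity gives $R(\mu,L)(\lambda-N_\varepsilon)\varphi=(I-T_\lambda)\varphi$ with $T_\lambda=\tfrac1\varepsilon R(\mu,L)S_\varepsilon$, which is exactly \eqref{e.14a}. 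Since $\|T_\lambda\|\le\tfrac1\varepsilon\cdot\tfrac1\mu=\tfrac1{\varepsilon\lambda+1}<1$, the operator $I-T_\lambda$ is boundedly invertible on $C_b(H)$ by a Neumann series, and $\|(I-T_\lambda)^{-1}R(\mu,L)\|\le\tfrac1{1-\|T_\lambda\|}\cdot\tfrac1\mu=\tfrac1\lambda$. Running the computation backwards, $(\lambda-N_\varepsilon)(I-T_\lambda)^{-1}R(\mu,L)f=f$ whenever $(I-T_\lambda)^{-1}R(\mu,L)f\in D(N_\varepsilon)$; by the next step this holds for $f\in C_b^1(H)$ once $\lambda$ exceeds the stated threshold, so $\mathrm{Range}(\lambda-N_\varepsilon)\supseteq C_b^1(H)$, which is dense in $C_b(H)$ (e.g.\ by a $C^1$-regularization argument). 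Combined with dissipativity this shows $N_\varepsilon$ is $m$-dissipative and identifies $R(\lambda,N_\varepsilon)=(I-T_\lambda)^{-1}R\bigl(\lambda+\tfrac1\varepsilon,L\bigr)$.

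\emph{Regularity of $T_\lambda$ on $C_b^1(H)$ and the gradient bound.} If $\psi\in C_b^1(H)$, the chain rule and the Lemma give $S_\varepsilon\psi\in C_b^1(H)$ with $D(S_\varepsilon\psi)(x)=\eta_x(\varepsilon,x)^*D\psi(\eta(\varepsilon,x))$; then \eqref{e3.70} yields $\|D(S_\varepsilon\psi)\|_0\le e^{K\varepsilon}\|D\psi\|_0$, while \eqref{e3.62} and \eqref{e3.72} make $D(S_\varepsilon\psi)$ uniformly continuous. On the other hand, differentiating $R_tg(x)=\int g(e^{tA}x+y)\,N_{Q_t}(dy)$ under the integral sign and using \eqref{e.3.2} gives $\|DR_tg\|_0\le e^{\omega t}\|Dg\|_0$, so for $\mu>\omega$ the resolvent $R(\mu,L)$ maps $C_b^1(H)$ into $D(L)\cap C_b^1(H)$ with $\|DR(\mu,L)g\|_0\le\tfrac1{\mu-\omega}\|Dg\|_0$. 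Composing, $T_\lambda$ maps $C_b^1(H)$ into $D(L)\cap C_b^1(H)$ and $\|DT_\lambda\psi\|_0\le\alpha\|D\psi\|_0$ with $\alpha=\dfrac{e^{K\varepsilon}}{\varepsilon(\mu-\omega)}=\dfrac{e^{K\varepsilon}}{\varepsilon\lambda+1-\varepsilon\omega}$, and $\alpha<1$ precisely when $\lambda>\omega+\tfrac{e^{K\varepsilon}-1}{\varepsilon}$. For such $\lambda$ and $f\in C_b^1(H)$ write $R(\lambda,N_\varepsilon)f=\sum_{n\ge0}T_\lambda^nR(\mu,L)f$: the series converges in $C_b(H)$ since $\|T_\lambda\|<1$; since $\|D(T_\lambda^nR(\mu,L)f)\|_0\le\alpha^n\|DR(\mu,L)f\|_0\le\alpha^n\tfrac1{\mu-\omega}\|Df\|_0$ it converges in $C_b^1(H)$; and since each term lies in $D(L)$, $L$ is closed on $C_b(H)$ (its resolvent is bounded) and $\sum_nLT_\lambda^nR(\mu,L)f$ converges geometrically in sup norm, the sum lies in $D(L)$. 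Hence $R(\lambda,N_\varepsilon)f\in D(L)\cap C_b^1(H)=D(N_\varepsilon)$ and
\[
\|DR(\lambda,N_\varepsilon)f\|_0\le\frac{1}{1-\alpha}\,\frac{1}{\mu-\omega}\,\|Df\|_0=\frac{1}{\lambda-\omega-\frac{e^{K\varepsilon}-1}{\varepsilon}}\,\|Df\|_0 ,
\]
which is \eqref{e3.16}.

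The main obstacle is the third step. The dissipativity and the Neumann-series construction of $R(\lambda,N_\varepsilon)$ are comparatively soft once one records that $L$ is a dissipative generator of contractions and that $S_\varepsilon$ is a unital positive contraction; what takes real work is (i) showing that both $R(\mu,L)$ and $S_\varepsilon$ preserve $C_b^1(H)$ with the sharp constants $\tfrac1{\mu-\omega}$ and $e^{K\varepsilon}$ — the former by differentiating a Gaussian integral and controlling $e^{tA}$ via \eqref{e.3.2}, the latter by the flow estimates of the Lemma, with \eqref{e3.72} needed for uniform continuity of the differential of $S_\varepsilon\psi$ — and (ii) upgrading the Neumann series from sup-norm to $C_b^1$-norm convergence. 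It is exactly this upgrade that forces the hypothesis $\lambda>\omega+(e^{K\varepsilon}-1)/\varepsilon$: that is precisely the level at which the $C_b^1$-contraction constant $\alpha$ of $T_\lambda$ drops below $1$.
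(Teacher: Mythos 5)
Your proposal is correct and follows essentially the same route as the paper: the same fixed--point/Neumann-series construction of $R(\lambda,N_\varepsilon)=(1-T_\lambda)^{-1}R(\lambda+\tfrac1\varepsilon,L)$ with $\|T_\lambda\|\le(1+\lambda\varepsilon)^{-1}$, the same use of the flow estimates \eqref{e3.62}, \eqref{e3.70}, \eqref{e3.72} to show $T_\lambda$ contracts on $C_b^1(H)$ with constant $e^{K\varepsilon}/(1+\varepsilon(\lambda-\omega))$, and the same combination of the two gradient bounds to get \eqref{e3.16}. Your direct triangle-inequality proof of dissipativity and your explicit justification that the Neumann sum lies in $D(L)$ are only cosmetic variants of (indeed, slightly more careful than) the paper's argument, which reads the sup-norm bound and the membership in $D(L)$ off the fixed-point equation \eqref{e3.18} itself.
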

\begin{proof}
Let $\varepsilon>0$, $\lambda >0$, $f\in C_b(H)$.
The equation 
$$
    \lambda\varphi_\varepsilon - L \varphi_\varepsilon - \mathcal F(\varphi_\varepsilon) = f
$$
is equivalent to 
$$
   \left( \lambda+\frac{1}{\varepsilon}\right)\varphi_\varepsilon - L \varphi_\varepsilon - \mathcal F(\varphi_\varepsilon) = f+\frac{1}{\varepsilon}\varphi_\varepsilon(\eta(\varepsilon,\cdot))
$$
and to 
\begin{equation} \label{e3.18}
    \varphi_\varepsilon=R\left(\lambda+\frac{1}{\varepsilon},L\right)f+ T_\lambda\varphi_\varepsilon.
\end{equation}
Since, as we can easily see, for any $\lambda>0$ 
\begin{equation} \label{e.15a}
   \| T_\lambda\psi\|_{} \leq \frac{1}{1+\lambda\varepsilon}\|\psi\|_{},\quad \forall \psi \in C_b(H),
\end{equation}
 the operator $T_\lambda$ is a contraction in $C_b(H) $ and so equation \eqref{e3.18} has a unique solution $\varphi_\varepsilon \in C_b(H)$ done by $\varphi_\varepsilon=R(\lambda,N_\varepsilon)f$.
Moreover, by \eqref{e.14a}, \eqref{e.15a} it holds
$$
   \|\varphi_\varepsilon\|_{} \leq \frac{1}{\lambda+ \frac{1}{\varepsilon}}\left[ \|f\|_{}+ \frac{1}{\varepsilon}\|\varphi_\varepsilon\|_{}\right].
$$
Consequently,
$$
    \|\varphi_\varepsilon\|_{} \leq  \frac{1}{\lambda}\|f\|_{}.
$$
Then, $N_\varepsilon$ is $m$-dissipative.
Now let $f\in C_b^1(H)$.
  We recall that for any $\lambda>0$, $\psi\in C_b(H)$ 
\begin{equation}\label{e.16a}
  R(\lambda,L)\psi(x)=\int_0^\infty e^{-\lambda t}R_t\psi(x)dt
\end{equation}
  and that
$$
    DR_t\psi(x)=\int_He^{tA^*}D\psi(e^{tA}x+y)N_{Q_t}(dy).
$$
Hence, for any $\lambda >\omega$ 
\begin{equation} \label{e3.18b}
    DR(\lambda,L)\psi(x)=   \int_0^\infty \int_H e^{-\lambda t}e^{tA^*}D\psi(e^{tA}x+y)N_{Q_t}(dy)dt
\end{equation} 
and so
\begin{equation} \label{e3.19}
   \|DR(\lambda,L)\psi\|_0 \leq \frac{1}{\lambda-\omega} \|D\psi\|_0
\end{equation}
Moreover, as it can be easily seen by \eqref{e3.18b}, $DR(\lambda,L)\psi$ is uniformly continuous. 
Then $R(\lambda,L):C_b^1(H)\to C_b^1(H)$.
Now, in order to prove  that $T_\lambda:C_b^1(H)\to C_b^1(H)$
it is sufficient to show that $\psi(\eta(\varepsilon,x)) \in C_b^1(H)$, for any $\psi \in C_b^1(H)$.
Indeed, by a standard computation, we have
$$
  D\psi(\eta(\varepsilon,\cdot))(x) = \eta_x^*(\varepsilon,x)D\psi(\eta(\varepsilon,x)),\quad x\in H.
$$
Consequently, by \eqref{e3.62}, \eqref{e3.72} we have
\begin{eqnarray*}
  && |D\psi(\eta(\varepsilon,\cdot))(x) - D\psi(\eta(\varepsilon,\cdot))(\overline{x})| \\
  &\leq&   \| \eta_x^*(\varepsilon,x)- \eta_x^*(\varepsilon,\overline{x})\|_{\mathcal L(H)} |D\psi(\eta(\varepsilon,x))| \\
    &&  + \|\eta_x^*(\varepsilon,\overline{x})\|_{\mathcal L(H)} |D\psi(\eta(\varepsilon,x))-D\psi(\eta(\varepsilon,\overline{x}))|   \\
  &\leq&   e^{\varepsilon  K_{}}\theta_{DF}
(e^{\varepsilon  K_{}}|x-\overline{x}|)\|D\psi\|_0+ e^{\varepsilon K_{}}\theta_{D\psi}(|\eta(\varepsilon,x) -\eta(\varepsilon,\overline{x})|)   \\
   &\leq &  e^{\varepsilon  K_{}} \theta_{DF} (e^{\varepsilon  K_{}}|x-\overline{x}|)\|D\psi\|_0+e^{\varepsilon  K_{}}\theta_{D\psi}(e^{\varepsilon  K_{}}|x-\overline{x}|),
\end{eqnarray*}
for any $x, \overline{x}\in H$.
So, $DT_\lambda \psi(\cdot)$ is uniformly continuous.
Now we prove that $T_\lambda$ is a contraction in $C_b^1(H)$.
By \eqref{e.14a}, \eqref{e.16a} we have
\begin{eqnarray*}
  T_\lambda\psi(x)&=&\frac{1}{\varepsilon}\int_0^\infty e^{-(\lambda+\frac{1}{\varepsilon}) t}R_t\psi(\eta(\varepsilon,\cdot))(x)dt \\
  &=&  \frac{1}{\varepsilon} \int_0^\infty \int_H e^{-(\lambda+\frac{1}{\varepsilon}) t} \psi(\eta(\varepsilon,e^{tA}x+y))N_{Q_t}(dy)dt
\end{eqnarray*}
Then
$$
  DT_\lambda\psi(x)=  
$$
$$
  =  \frac{1}{\varepsilon}  \int_0^\infty \int_H e^{-(\lambda+\frac{1}{\varepsilon}) t}e^{tA^*} \eta_x^*(\varepsilon,e^{tA}x+y)D\psi(\eta(\varepsilon,e^{tA}x+y))N_{Q_t}(dy)dt
$$
By \eqref{e3.70} it follows
\begin{eqnarray*}
  |DT_\lambda\psi(x)| &\leq& \frac{1}{\varepsilon}  \int_0^\infty e^{-(\lambda+\frac{1}{\varepsilon}-\omega) t}e^{\varepsilon K_{}}\|D\psi\|_0 dt \\
  &=&   \frac{e^{\varepsilon K_{}}}{1+\varepsilon(\lambda-\omega)} \|D\psi\|_0.
\end{eqnarray*}
Therefore, for any $\lambda > \omega+(e^{\varepsilon K_{}}-1)/\varepsilon$ the linear operator $T_\lambda$ is a contraction in $C_b^1(H)$ and its resolvent satisfies
$$
 (1-T_\lambda)^{-1}(C_b^1(H))\subset C_b^1(H),
$$
\begin{equation} \label{e3.20}
   \|D(1-T_\lambda)^{-1}\psi\|_0\leq \frac{1}{  1-\frac{e^{\varepsilon K_{}}}{1+\varepsilon(\lambda-\omega)}}\|D\psi\|_0.
\end{equation}
This implies
$$
  R(\lambda,N_\varepsilon)(C_b^1(H))=(1-T_\lambda)^{-1}R\Big(\lambda+\frac1\varepsilon,L\Big)(C_b^1(H))\subset C_b^1(H).
$$
Then, $N_\varepsilon$ is $m$-dissipative.
Finally, \eqref{e3.16} follows by \eqref{e3.19} and \eqref{e3.20}.
\end{proof}
\begin{Lemma}
   The operator $N_0$ is dissipative in $C_b(H)$.
\end{Lemma}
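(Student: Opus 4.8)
The plan is to deduce the dissipativity of $N_0$ by passing to the limit $\varepsilon\to 0^+$ in the dissipativity of the approximating operators $N_\varepsilon$, which is already contained in Proposition \ref{p3.7b}. Recall that a linear operator $(A,D(A))$ in a Banach space $X$ is dissipative if and only if $\|\lambda u-Au\|\geq \lambda\|u\|$ for every $\lambda>0$ and every $u\in D(A)$. Since $N_\varepsilon$ is $m$-dissipative it is, in particular, dissipative, and its domain $D(N_\varepsilon)=D(L)\cap C_b^1(H)$ coincides with $D(N_0)$; hence the inequality $\|\lambda\varphi-N_\varepsilon\varphi\|\geq \lambda\|\varphi\|$ holds for every $\lambda>0$, every $\varepsilon>0$ and every $\varphi\in D(N_0)$.

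First I would fix $\varphi\in D(N_0)$ and $\lambda>0$. Since $\varphi\in C_b^1(H)$, the Proposition preceding this lemma (estimate \eqref{e3.13}) yields $\mathcal F_\varepsilon\varphi\to \mathcal F\varphi$ in $C_b(H)$ as $\varepsilon\to 0^+$. As $N_\varepsilon\varphi=L\varphi+\mathcal F_\varepsilon\varphi$ and $N_0\varphi=L\varphi+\mathcal F\varphi$, this gives
\[
  \lim_{\varepsilon\to0^+}\big\|\,\lambda\varphi-N_\varepsilon\varphi-(\lambda\varphi-N_0\varphi)\,\big\|=\lim_{\varepsilon\to0^+}\|\mathcal F_\varepsilon\varphi-\mathcal F\varphi\|=0,
\]
so that $\lambda\varphi-N_\varepsilon\varphi\to\lambda\varphi-N_0\varphi$ in norm in $C_b(H)$.

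Then, letting $\varepsilon\to 0^+$ in the dissipativity inequality $\|\lambda\varphi-N_\varepsilon\varphi\|\geq\lambda\|\varphi\|$, I obtain $\|\lambda\varphi-N_0\varphi\|=\lim_{\varepsilon\to0^+}\|\lambda\varphi-N_\varepsilon\varphi\|\geq\lambda\|\varphi\|$. Since $\lambda>0$ and $\varphi\in D(N_0)$ are arbitrary, this is exactly the dissipativity of $(N_0,D(N_0))$ in $C_b(H)$. I do not expect a genuine obstacle in this lemma: the whole argument rests on the uniform convergence $\mathcal F_\varepsilon\varphi\to\mathcal F\varphi$ on $C_b^1(H)$ (already established) together with the elementary stability of the dissipativity inequality under norm limits of operators sharing a fixed domain. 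The delicate points — the $m$-dissipativity of each $N_\varepsilon$ (proved above via the contraction property of $T_\lambda$) and the surjectivity of $\lambda-N_0$ needed to upgrade dissipativity to $m$-dissipativity — lie outside the scope of this statement.
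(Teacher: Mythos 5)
Your proof is correct and follows essentially the same route as the paper: the paper also perturbs $\lambda\varphi-N_0\varphi$ into $\lambda\varphi-N_\varepsilon\varphi$ on the common domain $D(L)\cap C_b^1(H)$, invokes the bound $\|R(\lambda,N_\varepsilon)g\|\leq\lambda^{-1}\|g\|$ (which is just the dissipativity of $N_\varepsilon$ restated), and lets $\varepsilon\to0^+$ using \eqref{e3.13}. Your phrasing via the direct passage to the limit in the inequality $\|\lambda\varphi-N_\varepsilon\varphi\|\geq\lambda\|\varphi\|$ is an equivalent formulation of the same argument.
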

\begin{proof}
 We have to prove that $\|\lambda \varphi-N_0\varphi\|_{}\geq \lambda\|\varphi\|_{}$ for any $\varphi \in D(N_0)$, $\lambda>0$.
 So, if $\varphi \in D(L)\cap C_b^1(H)$ and $\lambda >0$ we set
 $$
   \lambda \varphi -L\varphi -\mathcal F\varphi=f.
 $$
 then for any $\varepsilon  >0$  we have
 $$
    \lambda\varphi-N_\varepsilon\varphi = f+\mathcal F\varphi-\mathcal F_\varepsilon\varphi.
 $$
 It follows 
 $$
   \varphi = R(\lambda, N_\varepsilon)( f+\mathcal F\varphi-\mathcal F_\varepsilon\varphi)
 $$
 and
 $$
     \|\varphi\|_{} \leq \frac1\lambda (\|f\|_{}+\| \mathcal F\varphi-\mathcal F_\varepsilon\varphi\|_{})
 $$
 Then by \eqref{e3.13} it follows 
 $$
      \|\varphi\|_{} \leq \frac1\lambda \|f\|_{}.
 $$
\end{proof}
Since $N_0$ is dissipative, its closure $\overline{N}_0$  is still dissipative (maybe it is multivalued).
By the following theorem follows Theorem \ref{t.1.2}.
\begin{Theorem}  \label{t3.9}
$N_0$ is $m$-dissipative.
\end{Theorem}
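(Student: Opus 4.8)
Since $N_0$ is dissipative, the plan is to show that for some $\lambda>\max\{0,\omega+K\}$ the range of $\lambda-N_0$ contains the dense subspace $C_b^1(H)$; combined with dissipativity and the closedness of $\overline{N_0}$ this makes the range of $\lambda-\overline{N_0}$ both dense and closed, hence all of $C_b(H)$, so that $\overline{N_0}$ is $m$-dissipative, which is the assertion. (That $C_b^1(H)$ is dense in $C_b(H)$ is standard, e.g.\ by inf--sup-convolution regularisation.) So fix such a $\lambda$, choose $\varepsilon_0>0$ so small that $c_0:=(e^{K\varepsilon_0}-1)/\varepsilon_0<\lambda-\omega$, fix $f\in C_b^1(H)$, and for $\varepsilon\in(0,\varepsilon_0]$ put $\varphi_\varepsilon:=R(\lambda,N_\varepsilon)f$. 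By Proposition~\ref{p3.7b} this is well defined, lies in $D(L)\cap C_b^1(H)$, and---uniformly in $\varepsilon\in(0,\varepsilon_0]$---satisfies $\|\varphi_\varepsilon\|\le\lambda^{-1}\|f\|$, $\|\mathcal F_\varepsilon\varphi_\varepsilon\|\le\|F\|_0\|D\varphi_\varepsilon\|_0$ by \eqref{e3.14}, and $\|D\varphi_\varepsilon\|_0\le C:=\|Df\|_0/(\lambda-\omega-c_0)$ by \eqref{e3.16} (using that $\varepsilon\mapsto(e^{K\varepsilon}-1)/\varepsilon$ is increasing).

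The key point---and the step I expect to be the real obstacle---is to prove in addition that $\{D\varphi_\varepsilon\}_{\varepsilon\in(0,\varepsilon_0]}$ is \emph{equi-uniformly continuous}: there is a modulus of continuity $\Theta$, depending on $f,\lambda,\varepsilon_0$ but not on $\varepsilon$, with $|D\varphi_\varepsilon(x)-D\varphi_\varepsilon(y)|\le\Theta(|x-y|)$ for all $\varepsilon\le\varepsilon_0$. I would obtain $\Theta$ from the Neumann series $\varphi_\varepsilon=\sum_{n\ge0}T_\lambda^n g_\varepsilon$ with $g_\varepsilon:=R(\lambda+\tfrac1\varepsilon,L)f$, which converges in $C_b^1(H)$ by the proof of Proposition~\ref{p3.7b}. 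From \eqref{e3.18b} and $\|e^{tA^*}\|_{\mathcal L(H)}\le e^{\omega t}$ one sees that $Dg_\varepsilon$ has the $\varepsilon$-independent modulus $r\mapsto\int_0^\infty e^{-(\lambda-\omega)t}\min\{\theta_{Df}(e^{\omega t}r),2\|Df\|_0\}\,dt$ and $\|Dg_\varepsilon\|_0\le\|Df\|_0/(\lambda+\tfrac1\varepsilon-\omega)$; and the formula for $DT_\lambda\psi$ in the same proof, together with \eqref{e3.62}, \eqref{e3.70}, \eqref{e3.72}, shows that $T_\lambda$ sends a bound $b$ and modulus $\sigma$ for $D\psi$ to the bound $q_\varepsilon b$, with $q_\varepsilon:=e^{\varepsilon K}/(1+\varepsilon(\lambda-\omega))<1$, and to the modulus $r\mapsto\tfrac1\varepsilon\int_0^\infty e^{-(\lambda+1/\varepsilon-\omega)t}e^{\varepsilon K}\big(\sigma(e^{\varepsilon K+\omega t}r)+b\,\theta_{DF}(e^{\varepsilon K+\omega t}r)\big)dt$ (the integrand being cut off for large $t$ by $\theta_{DF}\le 2K$ and $\sigma\le 2b$). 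Since $\|Dg_\varepsilon\|_0=O(\varepsilon)$ exactly compensates $(1-q_\varepsilon)^{-1}=O(\varepsilon^{-1})$---indeed $\sum_n q_\varepsilon^n\|Dg_\varepsilon\|_0\le\|Df\|_0/\big(\lambda-\omega-(e^{\varepsilon K}-1)/\varepsilon\big)\le C$, reproducing \eqref{e3.16}---the same geometric bookkeeping applied to the moduli yields $\Theta$ with an $\varepsilon$-uniform bound. This propagation of moduli through the series is the only genuinely laborious part; everything else is soft.

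Granting the uniform bound $C$ and modulus $\Theta$, the family $\{\varphi_\varepsilon\}$ is Cauchy in $C_b(H)$: subtracting the resolvent equations gives $(\lambda-N_\varepsilon)(\varphi_\varepsilon-\varphi_{\varepsilon'})=\mathcal F_\varepsilon\varphi_{\varepsilon'}-\mathcal F_{\varepsilon'}\varphi_{\varepsilon'}$, whence applying $R(\lambda,N_\varepsilon)$ and using the estimate behind \eqref{e3.13} with $\theta_{D\varphi_{\varepsilon'}}\le\Theta$ and $\|D\varphi_{\varepsilon'}\|_0\le C$ one gets $\|\varphi_\varepsilon-\varphi_{\varepsilon'}\|\le\tfrac{\|F\|_0}{\lambda}\big(\Theta(\|F\|_0\varepsilon)+\Theta(\|F\|_0\varepsilon')+CK(\varepsilon+\varepsilon')\big)\to0$. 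Let $\varphi\in C_b(H)$ be the limit; then $\|\varphi\|\le\lambda^{-1}\|f\|$. Since $\{D\varphi_\varepsilon\}$ is bounded in $C_b(H;H)$ and equi-uniformly continuous and $H$ is separable, along a suitable sequence $\varepsilon_n\downarrow0$ the vectors $D\varphi_{\varepsilon_n}(x)$ converge weakly in $H$ for every $x\in H$; passing to the limit in $\varphi_{\varepsilon_n}(x+h)-\varphi_{\varepsilon_n}(x)=\int_0^1\langle D\varphi_{\varepsilon_n}(x+sh),h\rangle\,ds$ identifies the weak limit with the G\^ateaux derivative of $\varphi$, which is norm-continuous with modulus $\Theta$; hence $\varphi\in C_b^1(H)$, $\|D\varphi\|_0\le C$, and $D\varphi_{\varepsilon_n}(x)\rightharpoonup D\varphi(x)$ for every $x$.

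It remains to pass to the limit in $\lambda\varphi_{\varepsilon_n}-L\varphi_{\varepsilon_n}-\mathcal F_{\varepsilon_n}\varphi_{\varepsilon_n}=f$, i.e.\ in $\varphi_{\varepsilon_n}=R(\lambda,L)\big(f+\mathcal F_{\varepsilon_n}\varphi_{\varepsilon_n}\big)$. For fixed $x$, write $\mathcal F_{\varepsilon_n}\varphi_{\varepsilon_n}=(\mathcal F_{\varepsilon_n}\varphi_{\varepsilon_n}-\mathcal F\varphi_{\varepsilon_n})+\mathcal F\varphi_{\varepsilon_n}$; the first term tends to $0$ uniformly by the estimate behind \eqref{e3.13} (again with $\Theta,C$), and $\mathcal F\varphi_{\varepsilon_n}(x)-\mathcal F\varphi(x)=\langle D\varphi_{\varepsilon_n}(x)-D\varphi(x),F(x)\rangle\to0$ by weak convergence, so $\mathcal F_{\varepsilon_n}\varphi_{\varepsilon_n}(x)\to\mathcal F\varphi(x)$ with $\sup_n\|\mathcal F_{\varepsilon_n}\varphi_{\varepsilon_n}\|\le C\|F\|_0$. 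Letting $n\to\infty$ in the representation \eqref{e.16a} of $R(\lambda,L)$ by dominated convergence (first in the Gaussian integral, then in $t$, using $\lambda>0$) gives $\varphi=R(\lambda,L)\big(f+\mathcal F\varphi\big)$; that is, $\varphi\in D(L)$, and since also $\varphi\in C_b^1(H)$ we have $\varphi\in D(N_0)$ and $\lambda\varphi-N_0\varphi=f$. Thus the range of $\lambda-N_0$ contains $C_b^1(H)$, and by the first paragraph $\overline{N_0}$ is $m$-dissipative, proving the theorem.
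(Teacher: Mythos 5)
Your proposal is correct in outline and its first half is exactly the paper's argument: set $\varphi_\varepsilon=R(\lambda,N_\varepsilon)f$ for $f\in C_b^1(H)$, use Proposition \ref{p3.7b} to get $\varphi_\varepsilon\in D(L)\cap C_b^1(H)=D(N_0)$ together with the $\varepsilon$-uniform gradient bound \eqref{e3.16}, and reduce the matter to showing $\mathcal F_\varepsilon\varphi_\varepsilon-\mathcal F\varphi_\varepsilon\to0$ in $C_b(H)$, which indeed hinges on an $\varepsilon$-uniform continuity estimate for $D\varphi_\varepsilon$. After that point you take a genuinely longer route than necessary. Once one knows $(\lambda-N_0)\varphi_\varepsilon=f+\mathcal F_\varepsilon\varphi_\varepsilon-\mathcal F\varphi_\varepsilon\to f$, the element $f$ already lies in the closure of the range of $\lambda-N_0$; since $C_b^1(H)$ is dense, that range is dense, and the Lumer--Phillips theorem (dissipativity plus dense range of $\lambda-N_0$ for a single $\lambda>0$) gives $m$-dissipativity of $\overline{N_0}$ directly. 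That is where the paper stops. Your last two paragraphs --- the Cauchy estimate for $\varphi_\varepsilon$, the pointwise weak convergence of $D\varphi_{\varepsilon_n}$, and the identification of the limit as an element of $D(N_0)$ actually solving $\lambda\varphi-N_0\varphi=f$ --- are sound in outline and buy the strictly stronger conclusion $C_b^1(H)\subset(\lambda-N_0)(D(N_0))$, but none of it is needed for the statement of the theorem.

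On the step you single out as the real obstacle: you are right that the equi-uniform continuity of $\{D\varphi_\varepsilon\}$ is where the content lies, and your Neumann-series propagation of moduli is a legitimate, if laborious, way to obtain it (the delicate point being that the number of terms needed grows like $1/\varepsilon$ while the dilation factors $e^{n\varepsilon K+\omega\sum t_i}$ must stay under control; this does work out, but you have only sketched it). Be aware, though, that for the paper's softer endgame one needs only the single estimate $\sup_x|D\varphi_\varepsilon(\eta(s,x))-D\varphi_\varepsilon(x)|\le c_1\,\theta_{Df}(\|F\|_0\varepsilon)$ for $s\le\varepsilon$, which the paper deduces in one line from \eqref{e3.16} by applying it, in effect, to translates of $f$ --- itself a moduli-propagation claim of the same nature as yours, stated much more tersely. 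So: no gap in principle, but the compactness and limit-identification stage can be deleted wholesale, and the one part you flag as laborious is the same part that carries the weight in the published proof.
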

\begin{proof}
Let $f\in C_b^1(H)$, $\varepsilon\in (0,1)$ and $\lambda>\omega + e^{ K_{}}-1$.
We denote by $\varphi_\varepsilon$ the solution of
$$
\lambda \varphi_\varepsilon-N_\varepsilon\varphi_\varepsilon=f.
$$
By Proposition \eqref{p3.7b} we have $ \varphi_\varepsilon \in D(L)\cap C_b^1(H)=D(N_0)$, 
  then $\varphi_\varepsilon$ is solution  of
$$
   \lambda \varphi_\varepsilon-N_0\varphi_\varepsilon=f+\mathcal F_\varepsilon \varphi_\varepsilon - \mathcal F \varphi_\varepsilon.
$$
We claim that $\mathcal F_\varepsilon \varphi_\varepsilon - \mathcal F \varphi_\varepsilon \to 0$ in $C_b(H)$ as $\varepsilon\to0^+$.
Indeed it holds
$$
   \mathcal F_\varepsilon \varphi_\varepsilon(x) - \mathcal F \varphi_\varepsilon(x) =
$$
$$
    \frac1\varepsilon\int_0^\varepsilon \left( \langle D\varphi_\varepsilon(\eta(s,x)),F(\eta(s,x))\rangle +\langle  D\varphi_\varepsilon(x),F(x)\rangle   \right)ds 
$$
$$
  =  \frac1\varepsilon\int_0^\varepsilon \left(\langle D\varphi_\varepsilon(\eta(s,x))-D\varphi_\varepsilon(x),F(\eta(s,x))\rangle\right.
$$
$$
   +  \left.\langle  D\varphi_\varepsilon(x), F(\eta(s,x))-F(x)\rangle   \right)ds .
$$
Hence
$$
   |\mathcal F_\varepsilon \varphi_\varepsilon(x) - \mathcal F \varphi_\varepsilon(x)| \leq
$$
$$
  \leq   \frac1\varepsilon\int_0^\varepsilon \left(|D\varphi_\varepsilon(\eta(s,x))-D\varphi_\varepsilon(x)|\|F\|_0+\|D\varphi_\varepsilon\|_0| F(\eta(s,x))-F(x)|\right)ds
$$
By \eqref{e3.66} we have
$$
    | F(\eta(s,x))-F(x)|\leq  K_{}| \eta(s,x)-x|
    \leq   K_{}\|F\|_0s\leq   K_{}\|F\|_0\varepsilon.
$$
Notice now that since $\varphi_\varepsilon=R(\lambda,N_\varepsilon)f$ and $\varepsilon \in (0,1)$, by \eqref{e3.16} it follows
$$
  \|D\varphi_\varepsilon\|_0\leq c_1\|Df\|_0,
$$
for all $\varepsilon \in (0,1)$, where $c_1=(\lambda-\omega- K_{}e^{ K_{}})^{-1}$.
This also implies
$$
     |D\varphi_\varepsilon(\eta(s,x))-D\varphi_\varepsilon(x)\|_0     \leq c_1\|Df(\eta(s,x)+\cdot)-Df(x+\cdot)\|_0 
$$
$$
   \leq  c_1 |\theta_{Df}(|\eta(s,x)-x|)  \leq c_1 \theta_{Df}(\|F\|_0\varepsilon),
$$
where $\theta_{Df}:\Rset^+\to \Rset^+$ is the modulus of continuity of $Df$.
So we find
$$
     |\mathcal F_\varepsilon \varphi_\varepsilon(x) - \mathcal F \varphi_\varepsilon(x)| \leq
$$
$$
        \leq   c_1 \|F\|_0 \theta_{Df}(\|F\|_0\varepsilon)+  c_1\|Df\|_0   K_{}\|F\|_0\varepsilon.
$$
Then $\mathcal F_\varepsilon \varphi_\varepsilon- \mathcal F \varphi_\varepsilon\to 0$ in $C_b(H)$, as $\varepsilon  \to  0^+$.
Finally, we have obtained
$$
   \lim_{ \varepsilon  \to  0^+} \big[\lambda\varphi_\varepsilon-N_0\varphi_\varepsilon]=f
$$
in $C_b(H)$. 
Therefore the closure of the range of $\lambda-N$ includes $C_b^1(H)$, which is dense in $C_b(H)$.
So, since $N_0$ is dissipative, by the Lumer-Phillips theorem the closure $\overline{N}_0$ of $N_0$ is $m$-dissipative.
\end{proof}
\subsection{Proof of Theorem \ref{t.1.2}}
By Theorem \ref{t3.9} the operator $N_0$ is $m$-dissipative in $C_b(H)$.
It is also known that if $\varphi \in D(L)\cap C_b^1(H)$, then $N\varphi=L\varphi+\mathcal F\varphi$ (see, for instance, \cite{Manca07}) and therefore $(N,D(N))$ is an extension of $(N_0,D(N_0))$.
Finally, since the operator $(N,D(N))$ is closed (see Proposition 3.4 in \cite{Priola}), by the Lumer-Phillips theorem it follows that the closure of $(N_0,D(N_0))$ in $C_b(H)$ coincides with $(N,D(N))$.  \qed



\begin{bibdiv}
 \begin{biblist}
\bib{Cerrai}{article}{  
    AUTHOR = {Cerrai, Sandra},   
     TITLE = {A {H}ille-{Y}osida theorem for weakly continuous semigroups},
   JOURNAL = {Semigroup Forum},
    VOLUME = {49},
      YEAR = {1994},            
    NUMBER = {3},
     PAGES = {349--367},        
}
\bib{DP03}{article}{
   author={Da Prato, Giuseppe},
   title={Perturbations of Ornstein-Uhlenbeck operators: an analytic
   approach},
   conference={
      title={ and economics},
      address={Levico Terme},
      date={2000},
   },
   book={
      series={Progr. Nonlinear Differential Equations Appl.},
      volume={55},
      publisher={Birkh\"auser},
      place={Basel},
   },
   date={2003},
   pages={127--139},
}
\bib{DP04}{book}{
   author={Da Prato, Giuseppe},
   title={Kolmogorov equations for stochastic {PDE}s},
   series={Advanced Courses in Mathematics. CRM Barcelona},
   publisher={Birkh\"auser Verlag},
   place={Basel},
   year={2004}
}
\bib{DPZ92}{book}{
   author={Da Prato, Giuseppe}, 
   author={Zabczyk, Jerzy},
   title={Stochastic equations in infinite dimensions},
   series={Encyclopedia of Mathematics and its Applications},
   volume={44},
   publisher={Cambridge University Press},
   place={Cambridge},
   year={1992}
}
\bib{Manca06}{article}{            
    AUTHOR = {Manca, Luigi},     
     TITLE = {On a class of stochastic semilinear {PDE}s},
   JOURNAL = {Stoch. Anal. Appl.},
    VOLUME = {24},
      YEAR = {2006},            
    NUMBER = {2},
     PAGES = {399--426},
}
\bib{Manca07}{misc}{
    author={Manca, Luigi},
    title={Kolmogorov equations for measures},
    year={2007},
    note={Preprint},
}
\bib{Priola}{article}{
   author={Priola, Enrico},
   title={On a class of {M}arkov type semigroups in spaces of uniformly
   continuous and bounded functions},
   journal={Studia Math.},
   volume={136},
   year={1999},
   number={3},
   pages={271--295}
}
\bib{Zambotti00}{article}{            
    AUTHOR = {Zambotti, Lorenzo},
     TITLE = {An analytic approach to existence and uniqueness for  
              martingale problems in infinite dimensions},
   JOURNAL = {Probab. Theory Related Fields},
    VOLUME = {118},             
      YEAR = {2000},
    NUMBER = {2},               
     PAGES = {147--168},                
}
\end{biblist}
  \end{bibdiv}
\end{document}